\newtheorem{theorem}{Theorem}[section]
\newtheorem{lemma}[theorem]{Lemma}
\newtheorem{proposition}[theorem]{Proposition}
\newtheorem{corollary}[theorem]{Corollary}
   \newcommand{\R}{\mathbb{R}}
\newcommand\tr{\mathrm{tr}}
\newcommand\dv{\mathrm{div}}
\begin{document}

\numberwithin{equation}{section}

\title{On Generalized quasi-Einstein Manifolds}
\author{Antonio Airton Freitas Filho \footnote{ Universidade Federal do Amazonas,
 Department of Mathematics, Av. Rodrigo Oct\'avio, 6200, 69080-900 Manaus, Amazonas, Brazil, aafreitasfilho@ufam.edu.br; http://www.ufam.edu.br. Partially supported by CAPES/Brazil-Finance Code 001. 
 } \qquad Keti Tenenblat\footnote{ Universidade de Bras\'{\i}lia,
 Department of Mathematics,
 70910-900, Bras\'{\i}lia-DF, Brazil, K.Tenenblat@mat.unb.br Partially supported by CNPq Proc. 312462/2014-0, Ministry of Science and Technology, Brazil and CAPES/Brazil-Finance Code 001.}
  }

\date{}

\maketitle{}

\begin{abstract}
A rigidity result for a class of compact generalized quasi-Einstein manifolds  with constant scalar curvature is obtained. Moreover, under some geometric assumptions, the rigidity for the noncompact case is also proved. Considering non constant scalar curvature, we characterize the generalized quasi-Einstein manifolds which are conformal to the Euclidean space and we show that there exist two classes of complete manifolds, which  are obtained by considering potential functions and conformal factors either to be radial  or invariant under the action of an (n-1) dimensional translation group. Explicit examples are given. 
\\  \\   
{\em Keywords: }  Generalized quasi-Einstein Manifolds, Einstein type manifolds, Rigidity results, Conformally flat complete manifolds.   \\   
{\em Mathematics Subject Classification 2010: } Primary: 53C25, 53C44, 53C24; Secondary: 53C21.
\end{abstract}

\maketitle

\section{Introduction}\label{int} 
The investigation of Einstein manifolds and their generalizations  has been of great interest throughout recent years. In this paper, we study gradient generalized quasi-Einstein manifolds.  This concept was introduced by Catino~\cite{Catino} as follow. 

Let $(M^n,g)$ be a connected Riemannian manifold of dimension $n\geqslant3$. We denote by $Ric$ and $S$ its Ricci tensor and scalar curvature, respectively. We say that $(M^n,g,f,\lambda,\nu)$ is a {\it gradient generalized quasi-Einstein manifold } or $(M^n,g)$ supports a gradient generalized quasi-Einstein structure if there exist smooth real functions $f,\ \lambda$ and $\nu$ defined on $M$, such that
\begin{align}\label{gqEm}
Ric+\nabla^2f-\nu df\otimes df =\lambda g,
\end{align}
where $\nabla^2f$ stands for the Hessian of $f$. We refer to $f$ as the {\it potential function}.  We say that the generalized quasi-Einstein manifold is trivial when $f$ is constant.

In \cite{Catino}, Catino  proved that a generalized quasi-Einstein manifold, with harmonic Weyl tensor and with vanishing radial Weyl curvature, is locally a warped product, with $(n-1)$-dimensional Einstein fibers. 

Moreover, Catino et al.~ \cite{Catinoetal} also introduced the notion of an Einstein-type structure on a Riemannian manifold, unifying various cases studied recently, such as gradient Ricci solitons, Yamabe solitons and quasi-Einstein manifolds  (see \cite{Cao}, \cite{Case}, \cite{Hamilton} and \cite{Pigola}).  A Riemannian manifold $(M^n, g)$ is said to be a {\it gradient Einstein-type manifold}  if there exist $f\in C^\infty(M)$ and constants 
$\alpha,\beta,\mu,\rho \in \mathbb{R} $, with $(\alpha,\beta,\mu)\neq (0,0,0)$, such that  
\begin{align}\label{gradtype}
\alpha Ric+\beta\nabla^2f+\mu df\otimes df&=(\rho S+\lambda)g,
\end{align}
where $\lambda\in  C^\infty(M)$. 

There is an intersection between those two notions. In fact, given a gradient 
generalized quasi-Einstein manifold satisfying \eqref{gqEm}, 
 if $\nu $ is a constant function 
 and $\lambda$ 
is a linear function of the scalar curvature then, it is a gradient Einstein-type manifold. On the other hand, given a gradient Einstein manifold satisfying \eqref{gradtype}, if the constants $\alpha$ and $\beta$ are non zero, then dividing by $\alpha$ and  considering 
$\tilde{f}=\beta f /\alpha$, $\tilde{\nu}=-{\mu\alpha}/{\beta^2}$ and  $\tilde{\lambda}=(\rho S+\lambda)/\alpha$, we have a gradient generalized quasi-Einstein manifold satisfying  $Ric+\nabla^2\tilde{f}-\tilde{\nu}d\tilde{f}\otimes d\tilde{f}=\tilde{\lambda}g$.	

 A Ricci-Hessian type manifold is another example of structure that generalizes an Einstein manifold and it is related to the construction of gradient Ricci (almost) soliton warped product (see \cite{RSWP,ARSWP}). For a complete characterization of gradient Ricci almost solitons on semi-Riemannian warped product see \cite{B-Tenen}.

Under certain conditions  compact Riemannian manifolds of constant scalar curvature are isometric to a standard sphere, for example, Gomes~\cite{Naza} proved that a compact gradient Einstein-type manifold of constant scalar curvature is isometric to the standard sphere with a well defined potential function. The Yamabe problem assures that a compact smooth manifold admits a Riemannian metric of constant scalar curvature \cite{yamabe}.
 Compact Einstein manifolds emerge as critical points of the total scalar curvature functional restricted to the Riemannian metrics of volume $1$ ~\cite{besse}. 

This paper contributes in two directions. The first one provides rigidity  results for complete quasi-Einstein type manifolds with constant scalar curvature $S$. The other one provides classes of complete quasi-Einstein type manifolds, conformal to the Euclidean space, with non constant curvature $S$.

We obtain rigidity results for compact (and noncompact) generalized quasi-Ein\-stein type manifolds with  constant scalar curvature, similar to those obtained in \cite{Naza} for the generalized Einstein-type manifolds.   assume an additional condition,  which is  motivated by a necessary property  
satisfied by any generalized quasi-Einstein structure (see Proposition \ref{gen} ), namely: 
\begin{align}\label{3form}
d\|\nabla f\|^2\wedge d\nu\wedge df=0, 
\end{align} 
where $f$ and $\nu$ are the functions satisfying \eqref{gqEm}.
 
 Considering \eqref{3form}, if $\nu$ is not a constant function, we  assume that $\nu$ is a function 
of $f$, i.e. , $\nu=v\circ f$ for some smooth function $v:I\subset\mathbb{R}\to\mathbb{R},$ where $I=f(M)$.  
This same assumption was considered in \cite{Mir-Beh}, where Mirshafeazadeh-Bidabad  showed that on a Bach-flat compact generalized quasi-Einstein manifold $(M^n,g,f,\lambda,\nu)$, when the function $\nu$ is a positive function of the potential $f$, then  certain tensors vanish.


In order to state  our rigidity results, Theorems \ref{R1} and \ref{R2}, 
assuming that $\nu=v\circ f$,  we define the following smooth function $\phi:I\subset\mathbb{R}\to\mathbb{R}$ given by
\begin{align}\label{OD1}
\phi(t)=c_1\int^t\left(e^{-\int^s v(r)dr}\right)ds+c_2,
\end{align}
for some constants $c_1$ and $c_2,$. 

Our first rigidity result is a characterization of the standard sphere.

\begin{theorem}\label{R1}
Let $(M^n,g,f,\lambda,\nu)$ be a compact generalized quasi-Einstein manifold of constant scalar curvature $S$ and $\nu=v\circ f$ for some smooth function $v:I\subset\mathbb{R}\to\mathbb{R},$ where $I=f(M).$ Then, $(M^n,g)$ is isometric to the standard sphere $\mathbb{S}^n(r)$ of radius $r$.  Moreover, up to rescaling, the potential function is given by $f=\phi^{-1}\Big(c-h_w /n \Big)$, where   $\phi$ is given by \eqref{OD1}, $c$ is a constant and $h_w$ is the height function on the unit sphere $\mathbb{S}^n$, with respect to a unit vector $w$.     
\end{theorem}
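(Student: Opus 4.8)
The plan is to linearize the structure equation \eqref{gqEm} through the change of variable dictated by \eqref{OD1}. Set $u=\phi\circ f$, where $\phi$ is defined in \eqref{OD1}. Writing $V$ for a primitive of $v$, we have $\phi'=c_1e^{-V}$ and hence $\phi''=-v\,\phi'$. Differentiating $u$ twice gives
\begin{align*}
\nabla^2u=\phi''(f)\,df\otimes df+\phi'(f)\,\nabla^2f ,
\end{align*}
and substituting $\nabla^2f=\lambda g-Ric+\nu\,df\otimes df$ from \eqref{gqEm} together with $\nu=v\circ f$, the coefficient of $df\otimes df$ becomes $\phi''(f)+v(f)\phi'(f)=0$. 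Thus the quadratic term cancels and $\nabla^2u=\phi'(f)\big(\lambda g-Ric\big)$. Introducing the traceless Ricci tensor $\tau:=Ric-\tfrac{S}{n}g$ and taking the traceless part of this identity yields the clean relation
\begin{align*}
\nabla^2u-\frac{\Delta u}{n}\,g=-\phi'(f)\,\tau .
\end{align*}

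The second step is an integral argument forcing $M$ to be Einstein. Since $S$ is constant, the contracted second Bianchi identity gives $\dv\,\tau=\tfrac{n-2}{2n}\nabla S=0$. Integrating over the closed manifold and integrating by parts,
\begin{align*}
\int_M\langle\tau,\nabla^2u\rangle\,dV=-\int_M\langle\dv\,\tau,\nabla u\rangle\,dV=0 ,
\end{align*}
while, because $\tau$ is trace free, $\langle\tau,\nabla^2u\rangle=\langle\tau,\nabla^2u-\tfrac{\Delta u}{n}g\rangle=-\phi'(f)\,\|\tau\|^2$. Choosing the constant $c_1\neq0$ in \eqref{OD1} makes $\phi'(f)=c_1e^{-V(f)}$ nowhere zero and of constant sign, so $\int_M\phi'(f)\|\tau\|^2\,dV=0$ forces $\tau\equiv0$; that is, $(M^n,g)$ is Einstein with $Ric=\tfrac{S}{n}g$.

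With $\tau=0$ the Hessian of $u$ is pure trace, $\nabla^2u=\psi\,g$ with $\psi=\Delta u/n$; since $\phi'$ is nowhere zero, $u$ is nonconstant exactly when $f$ is, so in the nontrivial case $u$ is nonconstant. Commuting third derivatives via the Ricci identity and using $Ric=\tfrac{S}{n}g$ gives $(n-1)\,d\psi=-\tfrac{S}{n}\,du$, whence $\psi=-\tfrac{S}{n(n-1)}u+\text{const}$. Replacing $u$ by $\bar u:=u-\text{const}$ we arrive at the Obata equation
\begin{align*}
\nabla^2\bar u=-\frac{S}{n(n-1)}\,\bar u\,g .
\end{align*}
Since $\bar u$ is a nonconstant eigenfunction, $\tfrac{S}{n-1}$ is a positive eigenvalue of $-\Delta$, so $S>0$; by the classical theorem of Obata, $(M^n,g)$ is isometric to $\mathbb{S}^n(r)$ with $r=\sqrt{n(n-1)/S}$.

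Finally, to recover the potential, one uses that on the round sphere the solutions of the Obata equation are exactly the height functions, so after rescaling the metric (normalizing $S$) the function $\bar u$ is identified, up to a normalizing constant, with $-h_w/n$ for a unit vector $w$. Hence $\phi(f)=c-h_w/n$ for a constant $c$, and inverting $\phi$ (legitimate since $\phi'$ is nowhere zero, so $\phi$ is a diffeomorphism onto its image) gives $f=\phi^{-1}\!\big(c-h_w/n\big)$, as claimed. The main obstacle is the second step: one must guarantee that the weight $\phi'(f)$ multiplying $\|\tau\|^2$ keeps a constant sign, which is exactly what the choice $c_1\neq0$ in \eqref{OD1} secures; once Einstein is in force, the remaining passage to the sphere is the standard Obata mechanism, and the only care needed is the bookkeeping of constants producing the factor $1/n$ in the height-function representation.
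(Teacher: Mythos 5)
Your proposal is correct and follows essentially the same route as the paper: the substitution $u=\phi\circ f$ cancelling the $df\otimes df$ term is the paper's Lemma \ref{lemeqRic}, your integration by parts against $\tau$ (using $\dv\tau=0$ from constant $S$) is exactly the paper's computation of $\dv\big(\mathring{Ric}(\nabla(\phi\circ f))\big)$ followed by the divergence theorem, your Ricci-identity step reproducing $\psi=-\tfrac{S}{n(n-1)}u+c$ is Lemma \ref{lemcon}, and the conclusion via Obata's theorem and first eigenfunctions is identical. If anything, you are slightly more careful than the paper in flagging the nonconstancy of $u$ and the positivity of $S$ needed before invoking Obata.
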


For the  noncompact complete  case, we consider a geometric assumption motivated by Karp's theorem (see Section \ref{pre}). More precisely 

\begin{theorem}\label{R2}
Let $(M^n,g,f,\lambda,\nu)$ be a complete noncompact generalized quasi-Einstein manifold of constant scalar curvature $S$ and $\nu=v\circ f$ for some smooth function $v:I\subset\mathbb{R}\to\mathbb{R},$ where $I=f(M).$ Consider the geodesic ball $B(r)$ of radius $r$ centered at some fixed point $x_\circ\in M.$ In addition,  suppose that 
\begin{align}\label{T1}
\displaystyle{\liminf_{r\to\infty}} \frac{1}{r}\int\limits_{B(2r)\backslash B(r)}\|\mathring{Ric}(\nabla(\phi\circ f))\|dvol_g=0,
\end{align}
where $\phi:I\subset\mathbb{R}\to\mathbb{R}$ is given by \eqref{OD1} and $\mathring{Ric}$ denotes the traceless Ricci tensor. Then, $(M^n,g)$ is an Einstein manifold of non positive scalar curvature $S$ and $f$ has at most one stationary point. More precisely, $(M^n,g)$ is isometric to one the following manifolds: 
\begin{itemize}
\item[$(i)$] a Riemannian Product $\mathbb{R}\times N^{n-1}.$ 
\item[$(ii)$] an Euclidean Space $\mathbb{R}^n.$
\item[$(iii)$] a Hyperbolic Space $\mathbb{H}^n\left(\sqrt{\frac{n(n-1)}{|S|}}\right).$ 
\item[$(iv)$] a Warped Product $\left(\mathbb{R}\times N^{n-1},dt^2+e^{2\sqrt{\frac{|S|}{n(n-1)}}t}g_N\right),$ where $(N^{n-1},g_N)$ is a Riemannian Einstein manifold.
\end{itemize}
\end{theorem}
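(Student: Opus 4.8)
The plan is to exploit the auxiliary function $\phi$ to convert the quasi-Einstein equation \eqref{gqEm} into a concircular-type equation. Set $u:=\phi\circ f$. Since $\phi'(t)=c_1 e^{-\int^t v(r)\,dr}$ we have $\phi''(t)=-v(t)\phi'(t)$, so the chain rule for the Hessian and the hypothesis $\nu=v\circ f$ give
\begin{align*}
\nabla^2 u=\phi''(f)\,df\otimes df+\phi'(f)\nabla^2 f=\phi'(f)\bigl(\nabla^2 f-\nu\,df\otimes df\bigr)=\phi'(f)\bigl(\lambda g-Ric\bigr).
\end{align*}
Taking the trace-free part, and using that $\mathring{Ric}$ is trace-free, I obtain the clean identity
\begin{align*}
\nabla^2 u-\frac{\Delta u}{n}\,g=-\phi'(f)\,\mathring{Ric}.
\end{align*}

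Next I test this identity against $\mathring{Ric}$ via a divergence. Because $S$ is constant, the contracted second Bianchi identity gives $\dv(Ric)=\tfrac12\,dS=0$, hence $\dv(\mathring{Ric})=0$. Therefore, for the vector field $X:=\mathring{Ric}(\nabla(\phi\circ f))$,
\begin{align*}
\dv(X)=\langle \dv(\mathring{Ric}),\nabla u\rangle+\langle \mathring{Ric},\nabla^2 u\rangle=\Bigl\langle \mathring{Ric},\nabla^2 u-\tfrac{\Delta u}{n}g\Bigr\rangle=-\phi'(f)\,\|\mathring{Ric}\|^2,
\end{align*}
since $\mathring{Ric}$ is orthogonal to the pure-trace part of $\nabla^2 u$. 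As $\phi'(f)=c_1 e^{-\int^f v}$ never vanishes and has constant sign, $\dv(X)$ has a fixed sign, and $\|X\|$ is exactly the integrand in the hypothesis \eqref{T1}. Thus Karp's theorem applies: the sublinear growth of $\int_{B(2r)\setminus B(r)}\|X\|\,dvol_g$ forces $\int_M\dv(X)\,dvol_g=0$, whence $\dv(X)\equiv 0$ and therefore $\mathring{Ric}\equiv 0$. Consequently $(M^n,g)$ is Einstein, $Ric=\tfrac{S}{n}g$.

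With the Einstein condition in hand, $u$ is a nonconstant concircular scalar field: $\nabla^2 u=\psi g$ with $\psi=\phi'(f)(\lambda-S/n)$, nonconstant since $\phi'\neq 0$ and $f$ is nonconstant. Commuting derivatives in $\nabla^2 u=\psi g$, using $\dv(\nabla^2 u)=\nabla(\Delta u)+Ric(\nabla u)$ together with $\Delta u=n\psi$ and $Ric=\tfrac{S}{n}g$, I get $(1-n)\nabla\psi=\tfrac{S}{n}\nabla u$, i.e. $\psi=-\tfrac{S}{n(n-1)}u+b$ for a constant $b$. This is precisely the equation whose complete solutions are classified by Tashiro's theorem, and the sign of $-S/(n(n-1))$ dictates the model along the integral curves of $\nabla u$: the value $S>0$ would force the trigonometric (compact, spherical) case, excluded by noncompactness, so $S\le 0$. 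For $S=0$ one obtains either a product $\R\times N^{n-1}$ (parallel $\nabla u$, $b=0$, no critical point, case $(i)$) or the Euclidean space $\R^n$ (one critical point, case $(ii)$); for $S<0$ one obtains either the warped product of $(iv)$ (no critical point) or the hyperbolic space $\mathbb{H}^n(\sqrt{n(n-1)/|S|})$ (one critical point, case $(iii)$), the exponential warping factor $e^{\sqrt{|S|/(n(n-1))}\,t}$ arising from the constant $-S/(n(n-1))$ in the concircular equation. Since $\nabla u=\phi'(f)\nabla f$ and $\phi'\neq 0$, the stationary points of $f$ and $u$ coincide, giving at most one stationary point.

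The main obstacle is the rigorous application of Karp's theorem in the middle step: one must verify that $\dv(X)$ genuinely has a fixed sign (which reduces to the never-vanishing of $\phi'$) and that \eqref{T1} is exactly Karp's integral-growth hypothesis for $X$, so that the vanishing conclusion $\dv(X)\equiv 0$ is legitimate on a complete noncompact manifold. A secondary technical point is matching Tashiro's canonical forms to the precise metrics listed in $(i)$--$(iv)$, in particular identifying the Einstein fiber $(N^{n-1},g_N)$ and the exponential warping factor from the single constant appearing in the concircular equation $\nabla^2 u=\bigl(-\tfrac{S}{n(n-1)}u+b\bigr)g$.
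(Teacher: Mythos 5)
Your proposal is correct and follows essentially the same route as the paper: the $\phi$-transformation turning \eqref{gqEm} into \eqref{gqEm1}, the divergence computation for $X=\mathring{Ric}(\nabla(\phi\circ f))$ (your $-\phi'(f)\|\mathring{Ric}\|^2$ equals the paper's $-\frac{1}{\phi'(f)}\|\mathring{\nabla}^2(\phi\circ f)\|^2$ via \eqref{tcles}), Karp's theorem to force $\mathring{Ric}\equiv 0$, and then the concircular equation $\nabla^2u=\bigl(-\tfrac{Su}{n(n-1)}+c\bigr)g$ with the Tashiro--Kerbrat--Kanai classification. The only cosmetic differences are that you re-derive Lemma \ref{lemcon} inline and obtain $S\leq 0$ from the exclusion of the compact spherical case rather than from Bonnet--Myers, both of which are equally valid.
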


In a recent paper, \cite{GWX} Gomes-Wang-Xia considered the $h$--almost Ricci solitons. This concept was originally introduced in \cite{maschler} to study  conformal changes of K\"ahler-Ricci solitons which give rise to new K\"ahler metrics. In \cite{GWX} (see Theorem 1), The authors
proved that a compact $h$--almost Ricci soliton,  with constant scalar curvature and non vanishing $h$ function, is isometric to a standard sphere. 
 With this result  in mind, we  recall that if $(M^n,g),$ $n\geqslant3,$ is a compact Riemannian manifold with constant scalar curvature, then it does not admit an Einstein metric $\overline{g}$ conformal to $g$, unless $(M^n,g)$ is  isometric to a standard sphere. This  is  a well known characterization of the standard sphere obtained by Obata (\cite{obata2} Proposition 6.2). Assuming hypothesis similar to those of Theorem \ref{R2},  we obtain the following interesting  rigidity  result, for the complete noncompact case, namely:

\begin{corollary}\label{aaa}
Let $(M^n,g)$ be a complete noncompact Riemannian manifold with constant scalar curvature $S$. Assume that $\overline{g}=\frac{1}{\varphi^2}g$ is an Einstein metric on $M^n$ where $\varphi$ is a positive smooth function. Let  $B(r)$ be the geodesic ball of radius $r$ centered at some fixed point $x_\circ\in M$.  In addition suppose that	
\begin{align}\label{Ta}
\displaystyle{\liminf_{r\to\infty}} \frac{1}{r}\int\limits_{B(2r)\backslash B(r)}\|\mathring{Ric}(\nabla\varphi)\|dvol_g=0.
\end{align}
Then, $(M^n,g)$ is an Einstein manifold with  $S\leq 0$ and $\varphi$ has at most one stationary point. More precisely, $(M^n,g)$ is isometric to one the following manifolds: 
\begin{itemize}
\item[$(i)$] a Riemannian Product $\mathbb{R}\times N^{n-1}.$ 
\item[$(ii)$] the Euclidean Space $\mathbb{R}^n.$
\item[$(iii)$] the  hyperbolic Space $\mathbb{H}^n\left(\sqrt{\frac{n(n-1)}{|S|}}\right).$ 
\item[$(iv)$] a Warped Product $\left(\mathbb{R}\times N^{n-1},dt^2+e^{2\sqrt{\frac{|S|}{n(n-1)}}t}g_N\right),$ where $(N^{n-1},g_N)$ is a Riemannian Einstein manifold.	
\end{itemize}
\end{corollary}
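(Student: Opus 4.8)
The plan is to reduce Corollary~\ref{aaa} to Theorem~\ref{R2} by recognizing the conformally-Einstein structure as a generalized quasi-Einstein structure. Writing the conformal factor as $\overline{g}=e^{2u}g$ with $u=-\log\varphi$, I would start from the classical transformation law for the Ricci tensor under a conformal change,
\begin{align*}
\overline{Ric}=Ric-(n-2)\left(\nabla^2 u-du\otimes du\right)-\left(\Delta u+(n-2)\|\nabla u\|^2\right)g,
\end{align*}
all operators on the right being taken with respect to $g$. A direct computation gives $\nabla^2 u-du\otimes du=-\varphi^{-1}\nabla^2\varphi$, so the Hessian term is expressed purely through $\varphi$. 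Since $\overline{g}$ is Einstein and $n\geqslant 3$, Schur's lemma forces $\overline{Ric}=\frac{\overline{S}}{n}\overline{g}=\frac{\overline{S}}{n\varphi^2}g$ with $\overline{S}$ constant, and substituting yields
\begin{align*}
Ric+\frac{n-2}{\varphi}\nabla^2\varphi=\lambda g, \qquad \lambda:=\frac{\overline{S}}{n\varphi^2}+\Delta u+(n-2)\|\nabla u\|^2,
\end{align*}
where $\lambda$ is a smooth function.

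The key algebraic step is to absorb the Hessian of $\varphi$ into a genuine quasi-Einstein term. Setting $f=(n-2)\log\varphi$ and $\nu=-\tfrac{1}{n-2}$ (a constant), one checks by a short computation that $\nabla^2 f-\nu\,df\otimes df=\frac{n-2}{\varphi}\nabla^2\varphi$, the $d\varphi\otimes d\varphi$ contributions cancelling exactly because $f''-\nu (f')^2=0$ for this choice of constants. Hence $(M^n,g,f,\lambda,\nu)$ satisfies \eqref{gqEm} and is a generalized quasi-Einstein manifold; since $\nu$ is constant it is trivially of the form $\nu=v\circ f$ with $v$ the corresponding constant function, so the hypotheses on $\nu$ in Theorem~\ref{R2} hold, and $S$ is constant by assumption.

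Finally I would verify that the integral hypothesis \eqref{T1} of Theorem~\ref{R2} is exactly hypothesis~\eqref{Ta}. With $v\equiv-\tfrac{1}{n-2}$, formula~\eqref{OD1} gives $\phi(t)=c_1(n-2)e^{t/(n-2)}+c_2$, whence $\phi\circ f=c_1(n-2)\varphi+c_2$ is affine in $\varphi$. Therefore $\nabla(\phi\circ f)=c_1(n-2)\nabla\varphi$ and $\|\mathring{Ric}(\nabla(\phi\circ f))\|=|c_1|(n-2)\,\|\mathring{Ric}(\nabla\varphi)\|$, so, choosing $c_1\neq 0$, condition \eqref{T1} is equivalent to \eqref{Ta}. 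Theorem~\ref{R2} then delivers at once that $(M^n,g)$ is Einstein with $S\leq 0$ together with the stated isometric classification $(i)$--$(iv)$; and since $\nabla f=\frac{n-2}{\varphi}\nabla\varphi$ with $\varphi>0$, the stationary points of $f$ and of $\varphi$ coincide, so $\varphi$ has at most one stationary point. The only delicate point is the bookkeeping in the conformal computation---pinning down the exact constants $n-2$ and $-1/(n-2)$ so that the Hessian matches and the $df\otimes df$ term vanishes---after which the statement is an immediate specialization of Theorem~\ref{R2}.
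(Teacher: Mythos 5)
Your proposal is correct and follows essentially the same route as the paper's own proof: both convert the conformally Einstein condition into the generalized quasi-Einstein equation via $f=(n-2)\ln\varphi$, $\nu=-\tfrac{1}{n-2}$, identify $\phi\circ f$ with (an affine function of) $\varphi$ so that hypothesis \eqref{Ta} becomes \eqref{T1}, and then invoke Theorem \ref{R2}. Your explicit tracking of the constants $c_1,c_2$ in \eqref{OD1} and of the coincidence of stationary points of $f$ and $\varphi$ only makes precise what the paper leaves as a ``straightforward calculation.''
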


The rigidity results stated above, require constant scalar curvature. If we release this requirement, then we obtain the following theorems, which prove the existence of classes of complete generalized quasi-Einstein manifolds, conformal to the Euclidean space $(\mathbb{R}^n,g_\circ)$, where $g_\circ$  is the standard metric. These manifolds are obtained by considering the conformal factor and the potential function to be either radial or invariant under the action of an $(n-1)$ dimensional translation group. 

\begin{theorem}\label{radialcomplete}
Let $(\mathbb{R}^n,g)$, $n\geq 3$, be a conformally flat Riemannian manifold, where $g=g_\circ/ \varphi^2$.
Assume that $\varphi(r)$ and $f(r)$ are radial  functions of $r=\|x\|^2$, such that $\varphi$ does not vanish and $f$ is strictly monotone.  Then  $(\mathbb{R}^n,g,f,\lambda,\nu)$ is a  generalized quasi-Einstein manifold if, and only if, $\lambda$ and $\nu$ are also functions of $r$ which are given by 
\begin{align}\label{rad1n}
\nu =\frac{1}{(f')^2}\left[(n-2)\frac{\varphi''}{\varphi}+f''+2f'\frac{\varphi'}{\varphi}\right], 
\end{align}
\begin{align}\label{rad2n}
\lambda=4 \ [ (n-1)\varphi'(\varphi - r\varphi')+r \varphi(\varphi''-f'\varphi ')]+2f'\varphi^2.
\end{align}
In this case,  
\begin{equation}\label{radS}
S=4(n-1)[ 2r\varphi\varphi''-nr(\varphi')^2+ n\varphi\varphi'].
\end{equation}
Moreover,  if $\varphi$ is a bounded function then the manifold is complete.
\end{theorem}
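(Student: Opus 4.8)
The plan is to rewrite the structural equation \eqref{gqEm} entirely in terms of the flat background metric $g_\circ$ via the standard conformal transformation laws, and then exploit the radial ansatz to collapse the tensorial identity into two scalar equations. Writing $g=e^{2\psi}g_\circ$ with $\psi=-\ln\varphi$, and using that $g_\circ$ is flat, the Ricci tensor of $g$ is
\[
Ric = -(n-2)\big(\nabla_\circ^2\psi - d\psi\otimes d\psi\big)-\big(\Delta_\circ\psi+(n-2)|\nabla_\circ\psi|_\circ^2\big)g_\circ,
\]
while the Hessian of the potential transforms as
\[
\nabla^2 f = \nabla_\circ^2 f-\big(d\psi\otimes df+df\otimes d\psi\big)+\langle\nabla_\circ\psi,\nabla_\circ f\rangle_\circ\, g_\circ,
\]
where $\nabla_\circ,\Delta_\circ$ and $\langle\cdot,\cdot\rangle_\circ$ are the Euclidean gradient, Laplacian and inner product. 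Substituting both into \eqref{gqEm} produces an identity among symmetric $2$-tensors built only from $\varphi$, $f$ and the Euclidean coordinates.

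Next I would insert the radial structure. Since $r=\|x\|^2$ gives $\partial_i r=2x_i$, every radial function $F(r)$ satisfies $\partial_i F=2F'x_i$ and $\partial_i\partial_j F=4F''x_ix_j+2F'\delta_{ij}$, whence $\Delta_\circ F=4rF''+2nF'$, $|\nabla_\circ F|_\circ^2=4r(F')^2$, and $\langle\nabla_\circ F,\nabla_\circ G\rangle_\circ=4rF'G'$. Consequently each term above takes the form $A(r)\,x_ix_j+B(r)\,\delta_{ij}$, and the identity splits into two independent scalar equations, obtained by matching the coefficient of $x_ix_j$ and the coefficient of $\delta_{ij}$; note that the right-hand side $\lambda g=\lambda\varphi^{-2}g_\circ$ contributes only to the $\delta_{ij}$ part.

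The coefficient of $x_ix_j$ is algebraic in $\nu$: after substituting $\psi'=-\varphi'/\varphi$ and simplifying, it reduces to $(n-2)\varphi''/\varphi+f''+2f'\varphi'/\varphi-\nu(f')^2=0$. Since $f$ is strictly monotone we have $f'\neq 0$, so this determines $\nu$ uniquely and yields \eqref{rad1n}. The coefficient of $\delta_{ij}$, after multiplication by $\varphi^2$ and regrouping, gives exactly \eqref{rad2n}. This establishes the ``only if'' direction, and the ``if'' direction is the same computation read in reverse: monotonicity of $f$ guarantees the prescribed $\nu$ is well defined, so the $x_ix_j$ part vanishes identically while the $\delta_{ij}$ part is met by the prescribed $\lambda$. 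Formula \eqref{radS} then follows either by tracing $Ric$ with $g^{ij}=\varphi^2\delta^{ij}$, or from the conformal scalar-curvature law $S=\varphi^2\big(-2(n-1)\Delta_\circ\psi-(n-1)(n-2)|\nabla_\circ\psi|_\circ^2\big)$, after inserting the radial expressions.

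Finally, for completeness, write $g=\varphi^{-2}g_\circ$. As $\varphi$ is continuous and non-vanishing on the connected space $\R^n$ it has a fixed sign, say $\varphi>0$, and if it is bounded, $0<\varphi\le C$, then $|v|_g=\varphi^{-1}|v|_\circ\ge C^{-1}|v|_\circ$ for every tangent vector. Hence any curve has $g$-length at least $C^{-1}$ times its Euclidean length; since $(\R^n,g_\circ)$ is complete, every divergent curve has infinite Euclidean length and therefore infinite $g$-length, so the standard completeness criterion gives completeness of $(\R^n,g)$. The only delicate point in the whole argument is keeping the sign and normalization bookkeeping correct in the conformal formulas, together with the factor $2$ coming from $r=\|x\|^2$; once the $A(r)x_ix_j+B(r)\delta_{ij}$ decomposition is in place, matching coefficients is routine.
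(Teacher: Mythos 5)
Your proposal is correct and follows essentially the same route as the paper: the paper's Lemma 4.1 is exactly your conformal-transformation computation written in coordinates (with $\varphi$ in place of $\psi=-\ln\varphi$), and the subsequent steps — splitting the radial equations into the $x_ix_j$ and $\delta_{ij}$ parts, using $f'\neq 0$ to solve the off-diagonal equation for $\nu$, reading off $\lambda$ from the diagonal part, tracing for $S$, and deducing completeness from boundedness of $\varphi$ via lengths of divergent curves — match the paper's proof step for step. Your completeness argument is in fact slightly more detailed than the paper's one-line remark, but the underlying idea is identical.
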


\begin{theorem}\label{translation}
Let $(\mathbb{R}^n,g)$ be a conformally flat Riemannian manifold, where 
$g=g_\circ/ \varphi^2$ and  $\varphi(u)$ is a non vanishing function of $u=\sum_{k=1}^n\alpha_k x_k$, where  $\alpha_k\in\R$ are such that $a=\sum_{k=1}^n \alpha_k^2\neq 0$.  
Let $f(u)$ be a strictly monotone function.  Then there exist  $\nu(u)$ and 
$ \lambda(u)$  such that  $(\mathbb{R}^n,g,f,\lambda,\nu)$ is a gene\-ralized quasi-Einstein manifold if, and only if, 
 \begin{equation}\label{nutransl}
\nu=\frac{1}{(f')^2}\left[(n-2)\frac{\varphi''}{\varphi}+f''+2f'\frac{\varphi'}{\varphi}\right], 
\end{equation}
  \begin{equation}\label{lambdatransl}
\lambda=a \,[\varphi\varphi''-f'\varphi\varphi'-(n-1)(\varphi')^2].
\end{equation}
Its scalar curvature is given by 
\begin{equation}\label{Stransl}
S=a(n-1)[2\varphi\varphi''-n(\varphi')^2].
\end{equation}
 Moreover,  if $\varphi$ is a bounded function then the manifold is complete.
 \end{theorem}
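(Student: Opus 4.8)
The plan is to reduce the tensorial equation \eqref{gqEm} to a pair of scalar ordinary differential equations in the single variable $u=\sum_{k=1}^n\alpha_k x_k$, exactly as for Theorem \ref{radialcomplete} but with the translation symmetry in place of the radial one. Write $g=e^{2\psi}g_\circ$ with $\psi=-\ln\varphi$, so that the background $g_\circ$ is flat and Cartesian Christoffel symbols vanish. The decisive simplification is that both $\varphi$ and $f$ depend only on $u$: for any $h=h(u)$ one has $\partial_i h=h'\alpha_i$, $\partial_i\partial_j h=h''\alpha_i\alpha_j$, $\Delta_\circ h=a\,h''$ and $|\nabla_\circ h|_\circ^2=a\,(h')^2$, where $a=\sum_k\alpha_k^2\neq0$. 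Consequently every symmetric $2$-tensor entering \eqref{gqEm} is a combination, with coefficients depending only on $u$, of the two fixed tensors $\alpha_i\alpha_j$ and $(g_\circ)_{ij}=\delta_{ij}$.

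First I would insert the standard conformal transformation laws. Since $Ric_\circ=0$, in Cartesian coordinates
\[
Ric_{ij}=-(n-2)\bigl(\partial_i\partial_j\psi-\partial_i\psi\,\partial_j\psi\bigr)-\bigl(\Delta_\circ\psi+(n-2)|\nabla_\circ\psi|_\circ^2\bigr)(g_\circ)_{ij},
\]
\[
(\nabla^2 f)_{ij}=\partial_i\partial_j f-\partial_i\psi\,\partial_j f-\partial_j\psi\,\partial_i f+\langle\nabla_\circ\psi,\nabla_\circ f\rangle_\circ\,(g_\circ)_{ij}.
\]
Substituting $\psi=-\ln\varphi$, so that $\psi'=-\varphi'/\varphi$ and $\psi''-\psi'^2=-\varphi''/\varphi$, and using the reductions above, equation \eqref{gqEm} becomes an identity of the form $A(u)\,\alpha_i\alpha_j+B(u)\,\delta_{ij}=0$. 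Because the matrix $\alpha\alpha^{\!\top}$ has rank one while $\delta$ has rank $n\ge3$, these two symmetric tensors are linearly independent, so the identity is equivalent to the two scalar equations $A\equiv0$ and $B\equiv0$.

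The coefficient $A$ of $\alpha_i\alpha_j$ gathers $(n-2)\varphi''/\varphi$ from $Ric$, together with $f''+2f'\varphi'/\varphi$ from $\nabla^2 f$ and $-\nu(f')^2$ from $-\nu\,df\otimes df$; since $f$ is strictly monotone, $f'\neq0$ and $A=0$ can be solved for $\nu$, giving precisely \eqref{nutransl}. The coefficient $B$ of $\delta_{ij}$ collects the metric-proportional terms of $Ric$ and $\nabla^2 f$ against $\lambda/\varphi^2$ (the $\delta_{ij}$-part of $\lambda g=\lambda\varphi^{-2}g_\circ$); solving $B=0$ for $\lambda$ yields exactly \eqref{lambdatransl}. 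This establishes both implications simultaneously: \eqref{gqEm} holds if and only if $\nu$ and $\lambda$ are given by \eqref{nutransl}--\eqref{lambdatransl}. The scalar curvature \eqref{Stransl} then follows either by tracing \eqref{gqEm} with respect to $g$, or more directly from the conformal scalar-curvature law $S=\varphi^2\bigl(-2(n-1)\Delta_\circ\psi-(n-1)(n-2)|\nabla_\circ\psi|_\circ^2\bigr)$, which reduces to $a(n-1)[2\varphi\varphi''-n(\varphi')^2]$.

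Finally, for completeness, note that $\varphi$ is smooth and non-vanishing on $\R$, hence of constant sign, say $\varphi>0$; if it is bounded, $\varphi\le M$, then $g=\varphi^{-2}g_\circ\ge M^{-2}g_\circ$, so the $g$-length of any curve dominates $M^{-1}$ times its Euclidean length. Any curve eventually leaving every compact subset of $\R^n$ has infinite Euclidean length, hence infinite $g$-length, and by the divergent-curve criterion (equivalently Hopf--Rinow) $(\R^n,g)$ is complete. The argument is otherwise mechanical; the only step demanding care is the bookkeeping in separating the $\alpha_i\alpha_j$ and $\delta_{ij}$ parts after inserting $\psi=-\ln\varphi$, and I expect that purely technical collection of terms to be the main obstacle—the translation case being in fact simpler than the radial case of Theorem \ref{radialcomplete}, since here no polynomial factors of $r$ appear.
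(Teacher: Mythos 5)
Your proof is correct, and its computational core coincides with the paper's: both reduce \eqref{gqEm} to component equations (the paper via Lemma \ref{prop4}, you via the conformal transformation laws with $\psi=-\ln\varphi$ --- the same identities in different notation), solve the $\alpha_i\alpha_j$-part for $\nu$ and the $\delta_{ij}$-part for $\lambda$, obtain $S$ from the conformal scalar-curvature formula, and prove completeness by the divergent-curve criterion. The genuine difference lies in how the tensor identity $A(u)\,\alpha_i\alpha_j+B\,\delta_{ij}=0$ is split into $A\equiv 0$ and $B\equiv 0$. The paper works separately with the off-diagonal equations \eqref{transij} and the diagonal ones \eqref{transii}; consequently, when every product $\alpha_i\alpha_j$ with $i\neq j$ vanishes --- i.e.\ $u=\alpha_1x_1$ depends on a single coordinate --- the equations \eqref{transij} become vacuous, and the paper must run a second case, comparing the diagonal equation for $i=1$ with the one for $i>1$ to recover the equation defining $\nu$. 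Your pointwise rank argument (the rank-one tensor $\alpha_i\alpha_j$ and the rank-$n$ tensor $\delta_{ij}$ are linearly independent since $\alpha\neq 0$ and $n\geq 3$) dispatches both cases at once, because it uses all components, diagonal ones included; this eliminates the case analysis that occupies roughly half of the paper's proof, at no cost in rigor. A further small dividend of your argument: since the linear-independence step is pointwise, you need not assume a priori that $\lambda$ depends only on $u$; the equation $B\equiv 0$ then \emph{forces} $\lambda=a\,[\varphi\varphi''-(n-1)(\varphi')^2-f'\varphi\varphi']$, so $\lambda$ is automatically a function of $u$, which is exactly the existence assertion in the statement.
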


In both theorems above, the generalized quasi-Einstein manifolds, given by Theorems \ref{radialcomplete} and \ref{translation} necessarily  satisfy the property that $\nu$ is a function of $f$, which was required in the rigidity results. This follows from the fact that we are considering  the potential function $f$ to be strictly monotone. In fact, one can obtain $r$ in terms of $f$ and hence also $\lambda$ and $\varphi$ are functions of $f$. 

This paper is organized as follows. In Section \ref{pre}, we obtain some preliminary results that will be used in the proofs of the next section. In Section \ref{proofrigid}, we prove the rigidity results given in   
Theorems \ref{R1}, \ref{R2} and Corollary \ref{aaa}. In Section \ref{nonconstantS}, we prove Theorems \ref{radialcomplete} and \ref{translation} which  
provide  ge\-ne\-ra\-lized quasi-Einstein manifolds, conformal to the $n$--dimensional Euclidean space and we give  
 explicit examples of complete ones.

\section{Preliminaries}\label{pre}
In this section, we present the necessary results that will be used  in
Section \ref{proofrigid}, where we prove our rigidity theorems. We begin with the proposition that motivates one of our hypothesis.

\begin{proposition}\label{gen} 
	Let $( M^n,g,f,\lambda,\nu  ) $ be a generalized quasi-Einstein manifold. Then 
	\begin{align*} 
		d\|\nabla f\|^2\wedge d\nu\wedge df=0.
	\end{align*}
\end{proposition}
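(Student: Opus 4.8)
The plan is to convert the defining identity \eqref{gqEm} into information about the three $1$-forms $d\|\nabla f\|^2$, $d\nu$ and $df$, and then to feed this into the integrability conditions obtained by commuting covariant derivatives. Since the $3$-form vanishes automatically wherever $\nabla f=0$, I would work on the open set $\{\nabla f\neq 0\}$. Rewriting \eqref{gqEm} as $\nabla^2 f=\lambda g-Ric+\nu\,df\otimes df$ and evaluating on $\nabla f$, using $\nabla^2 f(\nabla f,\cdot)=\tfrac12\,d\|\nabla f\|^2$, I obtain, as an identity of $1$-forms (writing $Ric(\nabla f):=Ric(\nabla f,\cdot)$),
\begin{equation*}
Ric(\nabla f)=\bigl(\lambda+\nu\|\nabla f\|^2\bigr)\,df-\tfrac12\,d\|\nabla f\|^2 .
\end{equation*}
Hence $d\|\nabla f\|^2=2\bigl(\lambda+\nu\|\nabla f\|^2\bigr)df-2\,Ric(\nabla f)$; wedging with $d\nu\wedge df$ and discarding the term containing $df\wedge df$, the claim reduces to showing that $Ric(\nabla f)$, $d\nu$ and $df$ are linearly dependent, i.e. $Ric(\nabla f)\wedge d\nu\wedge df=0$.

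To bring in $d\nu$ I would differentiate. Applying the Ricci identity to the third derivatives of $f$ and substituting \eqref{gqEm} back into the resulting Hessian terms gives, after antisymmetrization, the curl identity
\begin{equation*}
d\bigl(Ric(\nabla f)\bigr)=d\bigl(\lambda+\nu\|\nabla f\|^2\bigr)\wedge df,
\end{equation*}
equivalently $(\nabla_k R_{ij}-\nabla_i R_{kj})\nabla^j f=\bigl[(d\lambda+\|\nabla f\|^2\,d\nu+\tfrac{\nu}{2}\,d\|\nabla f\|^2)\wedge df\bigr]_{ki}$. Independently, taking the divergence of \eqref{gqEm} and using the contracted second Bianchi identity $\dv\,Ric=\tfrac12\,dS$ together with the commutation formula for $\dv(\nabla^2 f)$, I would derive the first-order vector identity
\begin{equation*}
(n-1)\,d\lambda=\tfrac12\,dS-\|\nabla f\|^2\,d\nu-\tfrac{\nu-1}{2}\,d\|\nabla f\|^2+\psi\,df
\end{equation*}
for a suitable function $\psi$.

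The final step is to eliminate the auxiliary scalars $\lambda$ and $S$ and the curvature terms. Decomposing $\nabla_k R_{ij}-\nabla_i R_{kj}$ by the second Bianchi identity into the Cotton tensor plus a $dS\otimes g$ piece, contracting with $\nabla f$, and wedging the curl identity with $d\nu$ while substituting for $d\lambda$ from the divergence identity, the contributions of $d\lambda$ and $dS$ cancel and $d\|\nabla f\|^2\wedge d\nu\wedge df$ gets expressed through the Cotton contribution $C_{kij}\nabla^j f$, which the same computation shows to be of the form (one-form)$\wedge df$. The heart of the matter, and the step I expect to be the main obstacle, is exactly this elimination: because $\lambda$ and $S$ vary independently, the first-order contractions are mutually consistent but do not by themselves force the triple wedge to vanish, so one must combine the Bianchi identities carefully (and, if needed, differentiate once more) so that the derivatives of the Ricci tensor collapse into multiples of $df$ and the $3$-form is driven to zero. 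The algebraic reduction in the first paragraph is routine; controlling these curvature terms is where the real work lies.
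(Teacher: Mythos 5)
Your preliminary reductions are correct, but the proposal is not a proof: the step you yourself call ``the heart of the matter'' --- eliminating $\lambda$, $S$ and the curvature terms via the Cotton tensor and the second Bianchi identity --- is left open, and it is precisely where the argument must close. So there is a genuine gap. Moreover, the difficulty you anticipate there is illusory, and the detour through third derivatives and the Cotton tensor is unnecessary: the conclusion already follows from the first-order identity you displayed. In
\begin{equation*}
(n-1)\,d\lambda=\tfrac12\,dS-\|\nabla f\|^2\,d\nu-\tfrac{\nu-1}{2}\,d\|\nabla f\|^2+\psi\,df,
\end{equation*}
the ``independently varying'' scalars $\lambda$ and $S$ enter only through the \emph{exact} $1$-forms $d\lambda$ and $dS$, so they are annihilated not by curvature identities but by one more exterior differentiation. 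Applying $d$ to both sides and using $d^2=0$ gives
\begin{equation*}
0=-d\|\nabla f\|^2\wedge d\nu-\tfrac12\,d\nu\wedge d\|\nabla f\|^2+d\psi\wedge df=-\tfrac12\,d\|\nabla f\|^2\wedge d\nu+d\psi\wedge df,
\end{equation*}
hence $\tfrac12\,d\|\nabla f\|^2\wedge d\nu=d\psi\wedge df$, and wedging with $df$ yields $d\|\nabla f\|^2\wedge d\nu\wedge df=0$. In particular your assertion that the first-order contractions ``do not by themselves force the triple wedge to vanish'' is false: they do.

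This two-line finish is exactly the mechanism of the paper's proof. There, the authors combine $\dv\,Ric=\tfrac12\,dS$, the trace of \eqref{gqEm}, and the contraction of \eqref{gqEm} with $\nabla f$ (your formula for $Ric(\nabla f,\cdot)$) into a single $1$-form identity in which every term is either exact, a smooth function times $df$, or $\tfrac{\|\nabla f\|^2}{2}\,d\nu$; they then take the exterior derivative and wedge with $df$. Two further remarks: your ``curl identity'' $d\bigl(Ric(\nabla f)\bigr)=d\bigl(\lambda+\nu\|\nabla f\|^2\bigr)\wedge df$ is simply the exterior derivative of your expression for $Ric(\nabla f)$, so no Ricci identity for third derivatives is needed to obtain it (and it carries no new information); and the restriction to the open set $\{\nabla f\neq0\}$ is harmless but also unnecessary, since nothing in the argument divides by $\|\nabla f\|$.
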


\begin{proof}
	Since $\dv Ric=\frac{1}{2}dS,$ we compute both sides of this equation.  
		\begin{align*}
		\dv Ric&= -\dv\left(\nabla^2f\right)+\dv\left(\nu df\otimes df\right)+\dv\left(\lambda g\right)\\
		&=-Ric(\nabla f,\cdot)-d\Delta f +\nu\Delta fdf+\frac{\nu}{2}d\|\nabla f\|^2+\langle\nabla f,\nabla\nu\rangle df + d\lambda,
	\end{align*}
	and since
	\begin{align*}
		Ric(\nabla f,\cdot)=-\frac{1}{2}d\|\nabla f\|^2+\nu\|\nabla f\|^2df+\lambda df,
	\end{align*}
	we have
	\begin{align}
		\dv Ric 
		&= \frac{1}{2}d\|\nabla f\|^2-\nu\|\nabla f\|^2df-\lambda df-d\Delta f  \nonumber\\
		&\hspace{.38cm}+\nu\Delta fdf+\frac{\nu}{2}d\|\nabla f\|^2+\langle \nabla f,\nabla \nu\rangle df+d\lambda \nonumber \\
		&= \frac{1}{2}d\|\nabla f\|^2-d\left(\lambda f\right)+fd\lambda-d\Delta f+d\lambda\nonumber\\
		&\hspace{.38cm}+\dv_f\left(\nu\nabla f\right)df+\frac{\nu}{2}d\|\nabla f\|^2, \label{LHS}
       	\end{align}
where $\dv_f(X):=\dv(X)-\langle X,\nabla f\rangle,$ for all $X\in\mathfrak{X}(M).$
	
	Moreover, it follows from \eqref{gqEm} that 
	\begin{align}
		\frac{1}{2}dS&=\frac{1}{2}d\left(-\Delta f+\nu\|\nabla f\|^2+n\lambda\right)\nonumber\\
		&=-\frac{1}{2}d\Delta f+\frac{\nu}{2}d\|\nabla f\|^2+\frac{\|\nabla f\|^2}{2}d\nu+\frac{n}{2}d\lambda.\label{RHS}
	\end{align}
	
	Therefore, \eqref{LHS} and \eqref{RHS} imply that 
	\begin{align*}
		-\frac{1}{2}d\Delta f+\frac{\nu}{2}d\|\nabla f\|^2+\frac{\|\nabla f\|^2}{2}d\nu+\frac{n}{2}d\lambda&=\frac{1}{2}d\|\nabla f\|^2-d\left(\lambda f\right)+fd\lambda-d\Delta f\\
		&\hspace{.38cm}+d\lambda+\dv_f\left(\nu\nabla f\right)df+\frac{\nu}{2}d\|\nabla f\|^2, 
	\end{align*}
	which reduces to	
	\begin{align*}
		\frac{1}{2}d\Delta f-\frac{1}{2}d\|\nabla f\|^2+\frac{(n-2)}{2}d\lambda+d\left(\lambda f\right)+\frac{\|\nabla f\|^2}{2}d\nu = \dv_f\left(\nu\nabla f\right)df+fd\lambda,
	\end{align*}
i.e. 
		\begin{align*}
		\frac{1}{2}d\left[\Delta_ff+(n-2)\lambda+2\lambda f\right]+\frac{\|\nabla f\|^2}{2}d\nu=\dv_f\left(\nu\nabla f\right)df+fd\lambda.
	\end{align*}
	
Taking exterior derivative, we get 
\begin{align*}
\frac{1}{2}d\|\nabla f\|^2\wedge d\nu=d\big[\dv_f(\nu\nabla f)-\lambda \big]\wedge df,
\end{align*}
and by considering the exterior product with $\ df$ we conclude that
\begin{align*}
d\|\nabla f\|^2\wedge d\nu\wedge df=0.
\end{align*}

\end{proof}

We recall that the traceless tensor associated to a tensor $T$ on $(M^n, g)$ is defined by 
\begin{align*}
	\mathring{T}=T-\frac{\tr_g T}{n}g.
\end{align*}

\begin{lemma}\label{lemeqRic}
Let $(M^n,g,f,\lambda,\nu)$ be a generalized quasi-Einstein manifold such that  $\nu=v\circ f$ for some smooth function $v:I\subset\mathbb{R}\to\mathbb{R},$ where $I=f(M)$ and  let $\phi$ be given by \eqref{OD1}. Then 
\begin{align}\label{gqEm1}
	Ric_x+\frac{1}{d(\phi\circ f)_x}\nabla^2(\phi\circ f)_x=\lambda(x) g_x,
\end{align}
for all $x\in M$.  Moreover, 
\begin{align}\label{tcles}
\mathring{Ric}_x=-\frac{1}{d(\phi\circ f)_x}\mathring{\nabla}^2(\phi\circ f)_x,
\end{align}
for all $x\in M$ and $t=f(x)\in I.$
 In particular, $(M^n,g)$ is an Einstein manifold if, and only if,  $\nabla (\phi\circ f)$ is a conformal vector field. 
\end{lemma}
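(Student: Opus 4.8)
The plan is to observe that the function $\phi$ in \eqref{OD1} is engineered precisely so that its composition with $f$ converts the generalized quasi-Einstein equation \eqref{gqEm} into a pure Ricci--Hessian equation, absorbing the troublesome $\nu\, df\otimes df$ term. To see this, I would first differentiate \eqref{OD1} to record the two facts that drive everything:
\[
\phi'(t) = c_1 e^{-\int^t v(r)\,dr}, \qquad \phi''(t) = -v(t)\,\phi'(t).
\]
In particular $\phi'$ never vanishes (for $c_1\neq 0$), so the quotients appearing in \eqref{gqEm1} and \eqref{tcles} are well defined; throughout, $d(\phi\circ f)_x$ is to be read as $\phi'(f(x))$.

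Next I would apply the chain rule for the Hessian of a composition,
\[
\nabla^2(\phi\circ f) = \phi'(f)\,\nabla^2 f + \phi''(f)\,df\otimes df,
\]
and substitute $\phi''(f) = -v(f)\,\phi'(f) = -\nu\,\phi'(f)$ to obtain $\nabla^2(\phi\circ f) = \phi'(f)\big[\nabla^2 f - \nu\, df\otimes df\big]$. I would then invoke \eqref{gqEm} in the form $\nabla^2 f - \nu\, df\otimes df = \lambda g - Ric$, which gives $\nabla^2(\phi\circ f) = \phi'(f)(\lambda g - Ric)$; dividing by $\phi'(f)$ and rearranging yields \eqref{gqEm1}. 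Taking the traceless part of both sides, and using that $g$ contributes only to the trace so that $\mathring{g}=0$, produces \eqref{tcles} at once.

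Finally, for the Einstein characterization I would recall that a gradient field $\nabla h$ is conformal exactly when $\mathcal{L}_{\nabla h}\, g = 2\nabla^2 h$ is pointwise a multiple of $g$, i.e.\ when $\mathring{\nabla}^2 h = 0$. Applying this with $h = \phi\circ f$ and reading \eqref{tcles} in reverse (again using $\phi'(f)\neq 0$), the manifold satisfies $\mathring{Ric}=0$, hence is Einstein, precisely when $\mathring{\nabla}^2(\phi\circ f)=0$, that is, precisely when $\nabla(\phi\circ f)$ is a conformal vector field. The argument is routine throughout; the only genuine content is recognizing that the ODE defining $\phi$ is tuned so that $\phi''(f) + \nu\,\phi'(f)=0$, which is exactly what makes the $df\otimes df$ term disappear. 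The main obstacle, such as it is, lies in spotting this design of $\phi$ rather than in carrying out any delicate estimate.
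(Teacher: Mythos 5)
Your proof is correct and follows essentially the same route as the paper: expand the Hessian of $\phi\circ f$ by the chain rule, use the ODE $\phi''+v\,\phi'=0$ built into \eqref{OD1} to cancel the $\nu\,df\otimes df$ term against \eqref{gqEm}, then take traceless parts and read off the conformal-field characterization. The only cosmetic difference is that the paper solves for $\nabla^2 f$ and substitutes into \eqref{gqEm}, while you substitute \eqref{gqEm} into the expansion of $\nabla^2(\phi\circ f)$ — the same manipulation in the opposite order.
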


\begin{proof}
By considering  the hessian of $\phi\circ f,$ for $x\in M$, we have that 
\begin{align*}
\left(\nabla^2 f\right)_x = \frac{1}{\phi'(t)}\nabla^2(\phi\circ f)_x -\frac{\phi''(t)}{\phi'(t)}df_x\otimes df_x, 
\end{align*} 
where $\phi'(t)=d(\phi\circ f)_x\neq0$ and $t=f(x)\in I$. Since $\phi$ is given by \eqref{OD1}, it follows that $\phi''(t)+v(t)\phi'(t)=0$. Therefore, substituting into \eqref{gqEm}, we conclude that \eqref{gqEm1} holds.  

From \eqref{gqEm1}, we can easily check the identity \eqref{tcles}.
Moreover, it also follows from \eqref{gqEm1} that  $(M^n,g)$ is an Einstein manifold if, and only if, $\nabla(\phi\circ f)$ is a conformal vector field. 
\end{proof}

\begin{lemma}\label{lemcon}
Let $(M^n,g)$ be a connected Einstein manifold, $n\geqslant3,$ and let $u:M\to\mathbb{R}$ be a smooth function  whose gradient $\nabla u$ is a conformal vector field. Then 
\begin{align*}
\nabla^2u=\left[-\frac{Su}{n(n-1)}+c\right]g,
\end{align*}
for some constant $c.$
\end{lemma}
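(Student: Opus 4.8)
The plan is to first translate the hypothesis into an equation for the Hessian and then couple it with the Einstein condition through a curvature commutation identity. Since $\nabla u$ is a gradient conformal vector field, the Lie derivative $\mathcal{L}_{\nabla u}g = 2\nabla^2 u$ is a pointwise multiple of $g$; hence there is a smooth function $\rho$ with $\nabla^2 u = \rho\, g$, and taking traces gives $\rho = \Delta u/n$. The problem is thereby reduced to identifying $\rho$ with $-\tfrac{S}{n(n-1)}u + c$.

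Next I would differentiate $\nabla_i\nabla_j u = \rho\, g_{ij}$ once more and apply the Ricci identity to the third covariant derivatives of $u$ (equivalently, to the $1$-form $du$). Writing $\nabla_i\nabla_j\nabla_k u = (\nabla_i\rho)g_{jk}$ and antisymmetrizing in the first two indices yields
$$(\nabla_i\rho)\,g_{jk} - (\nabla_j\rho)\,g_{ik} = -R_{ijk}{}^{l}\nabla_l u.$$
Tracing over the pair $(i,k)$ collapses the left-hand side to $-(n-1)\nabla_j\rho$ and contracts the Riemann tensor into the Ricci tensor on the right, producing the relation $(n-1)\nabla\rho = Ric(\nabla u)$ up to the overall sign dictated by the curvature convention.

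Now I would invoke the Einstein condition. For a connected Einstein manifold with $n\geq 3$, Schur's lemma guarantees that $S$ is constant and that $Ric = \tfrac{S}{n}g$, so $Ric(\nabla u) = \tfrac{S}{n}\nabla u$. Substituting into the traced identity gives $\nabla\rho = -\tfrac{S}{n(n-1)}\nabla u$, that is $\nabla\bigl(\rho + \tfrac{S}{n(n-1)}u\bigr) = 0$. Since $M$ is connected, this forces $\rho + \tfrac{S}{n(n-1)}u$ to be a single constant $c$, which is exactly the asserted formula once reinserted into $\nabla^2 u = \rho\, g$.

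The step I expect to be most delicate is the bookkeeping of signs: one must keep the curvature sign convention consistent through the commutation identity and the contraction so that the coefficient emerges as $-\tfrac{S}{n(n-1)}$ rather than its negative. A convenient consistency check is the round sphere $\mathbb{S}^n(1)$, where $S=n(n-1)$ and a height function satisfies $\nabla^2 u = -u\,g$, matching the formula with $c=0$. The other ingredient that must be used explicitly is the constancy of $S$ via Schur's lemma, valid precisely because $n\geq 3$, since it is what allows the passage from the pointwise gradient identity $\nabla\rho = -\tfrac{S}{n(n-1)}\nabla u$ to the integrated scalar relation.
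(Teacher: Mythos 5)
Your proposal is correct and follows essentially the same route as the paper: both reduce the conformal hypothesis to $\nabla^2u=\frac{\Delta u}{n}g$, combine it with the contracted commutation identity relating $\dv(\nabla^2u)$, $Ric(\nabla u,\cdot)$ and $d\Delta u$, and then use the Einstein condition together with Schur's lemma (constancy of $S$) and connectedness to integrate the resulting gradient equation. The only difference is presentational: the paper quotes the identity $Ric(\nabla u,\cdot)+d\Delta u=\dv(\nabla^2u)$ as a known fact, whereas you re-derive it by antisymmetrizing and tracing the third covariant derivatives of $u$, with the sphere check correctly pinning down the sign.
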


\begin{proof}
Since $(M^n,g)$ is an Einstein manifold, i.e., $Ric=\frac{S}{n}g,$ then $S$ is constant and since  $\nabla u$ is a conformal vector field we get 
\begin{align*}
\nabla^2u=\frac{\Delta u}{n}g.
\end{align*}

Recall two well known  facts in the literature 
\begin{align*}
Ric(\nabla u,\cdot)+d\Delta u=\dv(\nabla^2u) \ \ \mbox{and} \ \ \dv(wg)=dw,
\end{align*}
for smooth functions $u$ and $w$.  Taking $w=\frac{\Delta u}{n}$ we get
\begin{align*}
\frac{S}{n}du+d\Delta u=\frac{1}{n}d\Delta u.
\end{align*}	
Thus, dividing by $n-1,$ we have
\begin{align*}
d\left[\frac{Su}{n(n-1)}+\frac{\Delta u}{n}\right]=0.	
\end{align*}
and hence 
\begin{align*}
\frac{\Delta u}{n}=-\frac{Su}{n(n-1)}+c,
\end{align*}
for some constant $c.$
\end{proof}	

We conclude this section by stating Karp's theorem, which is an extension of Stokes theorem and it will  be used in the proof of Theorem \ref{R2}. 

\begin{lemma}[Karp's theorem \cite{karp}]\label{karp} Let $(M^n,g)$ be a complete noncompact Riemannian manifold. Consider the geodesic ball $B(r)$ of radius $r$ centered at some fixed $x\in M^n$ and a vector field $X$ such that
\begin{align*}
\displaystyle{\liminf_{r\to\infty}} \frac{1}{r}\int\limits_{B(2r)\backslash B(r)}\|X\|dvol_g=0.
\end{align*}
If $\dv X$ has an integral (i.e. if  either $(\dv X)^+$ or $(\dv X)^-$ is integrable), then $\int_M\dv Xdvol_g=0$. In particular, if $\dv X$ does not change sign outside some compact set, then $\int_M\dv Xdvol_g=0$. 
\end{lemma}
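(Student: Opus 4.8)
The plan is to prove this by integrating by parts against a family of Lipschitz cutoff functions adapted to the geodesic balls, and then passing to the limit in a way that exploits the one-sided integrability of $\dv X$. Since the hypothesis is a $\liminf$, I would first fix a sequence of radii $r_j\to\infty$ realizing it, so that $\frac{1}{r_j}\int_{B(2r_j)\backslash B(r_j)}\|X\|\,dvol_g\to 0$ along $j$.

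Writing $\rho(y)=d(x,y)$ for the distance to the fixed center, I define the cutoffs
\[
\psi_j(y)=\min\left\{1,\ \max\left\{0,\ 2-\frac{\rho(y)}{r_j}\right\}\right\},
\]
so that $\psi_j\equiv 1$ on $B(r_j)$, $\psi_j\equiv 0$ outside $B(2r_j)$, and $\psi_j$ is Lipschitz with $\|\nabla\psi_j\|=1/r_j$ almost everywhere on the annulus $B(2r_j)\backslash B(r_j)$ (using $\|\nabla\rho\|=1$ a.e.) and zero elsewhere. Each $\psi_j X$ is then a compactly supported Lipschitz vector field, so the divergence theorem gives $\int_M\dv(\psi_j X)\,dvol_g=0$, that is,
\[
\int_M\psi_j\,\dv X\,dvol_g=-\int_M\langle\nabla\psi_j,X\rangle\,dvol_g.
\]
The right-hand side is controlled directly by the hypothesis, since $\nabla\psi_j$ is supported in the annulus with norm $1/r_j$ there:
\[
\left|\int_M\langle\nabla\psi_j,X\rangle\,dvol_g\right|\le\frac{1}{r_j}\int_{B(2r_j)\backslash B(r_j)}\|X\|\,dvol_g\longrightarrow 0,
\]
so $\int_M\psi_j\,\dv X\,dvol_g\to 0$.

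It then remains to show $\int_M\psi_j\,\dv X\,dvol_g\to\int_M\dv X\,dvol_g$, which is the delicate step because $\dv X$ need not be integrable. Suppose $(\dv X)^-$ is integrable (the other case is symmetric) and split $\dv X=(\dv X)^+-(\dv X)^-$. Since $0\le\psi_j\le 1$ and $\psi_j\to 1$ pointwise, dominated convergence gives $\int_M\psi_j(\dv X)^-\,dvol_g\to\int_M(\dv X)^-\,dvol_g$, while Fatou's lemma gives $\liminf_j\int_M\psi_j(\dv X)^+\,dvol_g\ge\int_M(\dv X)^+\,dvol_g$. Combining these with $\int_M\psi_j\,\dv X\,dvol_g\to 0$ forces $\int_M(\dv X)^+\,dvol_g\le\int_M(\dv X)^-\,dvol_g<\infty$; hence $(\dv X)^+$ is integrable as well, $\dv X\in L^1(M)$, and a final application of dominated convergence yields $\int_M\psi_j\,\dv X\,dvol_g\to\int_M\dv X\,dvol_g$, so $\int_M\dv X\,dvol_g=0$.

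The main obstacle is exactly this limiting step: the naive interchange of limit and integral fails without integrability, so the argument must first use Fatou to upgrade the one-sided integrability hypothesis to full integrability of $\dv X$ before concluding. A secondary technical point is that $\rho$, and hence each $\psi_j$, is only Lipschitz; this is harmless because $\|\nabla\rho\|=1$ holds almost everywhere and the divergence theorem remains valid for compactly supported Lipschitz vector fields, or one may mollify the $\psi_j$. Finally, the stated special case is immediate: if $\dv X$ does not change sign outside a compact set, then one of $(\dv X)^\pm$ is supported (up to a compact set) where it is automatically integrable, so the integrability hypothesis holds and the conclusion follows.
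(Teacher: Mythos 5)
Your proof is correct, but there is nothing in the paper to compare it against: the paper does not prove this lemma at all, it is quoted as a known result from Karp \cite{karp} and used as a black box in the proof of Theorem \ref{R2}. On its own merits, your argument is sound and is essentially Karp's original one: Lipschitz cutoffs $\psi_j$ equal to $1$ on $B(r_j)$ and vanishing off $B(2r_j)$, chosen along a sequence $r_j\to\infty$ realizing the $\liminf$; the divergence theorem applied to the compactly supported Lipschitz field $\psi_j X$, so that the boundary term is controlled by $\frac{1}{r_j}\int_{B(2r_j)\backslash B(r_j)}\|X\|\,dvol_g\to 0$; and then the key measure-theoretic step. You correctly identify where the real content lies: with only one-sided integrability of $\dv X$, one cannot directly interchange limit and integral, and your combination of dominated convergence on the integrable part $(\dv X)^-$ with Fatou on $(\dv X)^+$, which forces $\int_M(\dv X)^+\le\int_M(\dv X)^-<\infty$ and hence $\dv X\in L^1(M)$, is exactly the right way to close that gap. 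The two technical caveats you flag (that $\rho$ and hence $\psi_j$ are only Lipschitz, handled since $\|\nabla\rho\|=1$ a.e.\ and the divergence theorem holds for compactly supported Lipschitz fields, and that the special case follows because a sign condition outside a compact set makes one of $(\dv X)^{\pm}$ integrable) are both handled correctly.
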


\section{Proof of the rigidity results}\label{proofrigid}

\subsection{Proof of Theorem \ref{R1}} 
\begin{proof}
We consider $\phi:I\subset\mathbb{R}\to\mathbb{R}$ given by \eqref{OD1}. Then, it follows from Lemma \ref{lemeqRic}  that  \eqref{gqEm} is equivalent to \eqref{gqEm1}. Hence, by computing $\dv\big(\mathring{Ric}(\nabla(\phi\circ f))\big)$ we have
\begin{align*}
\dv\big(\mathring{Ric}(\nabla(\phi\circ f))\big)(x)&=\frac{n-2}{2n}\langle\nabla S_x,\nabla(\phi\circ f)_x\rangle+\langle Ric_x,\mathring{\nabla}^2(\phi\circ f)_x\rangle\\
&=\left\langle \lambda(x) g_x-\frac{1}{d(\phi\circ f)_x}\nabla^2(\phi\circ f)_x,\mathring{\nabla}^2(\phi\circ f)_x\right\rangle\\
&=-\frac{1}{d(\phi\circ f)_x}\|\mathring{\nabla}^2(\phi\circ f)\|^2(x),
\end{align*}
for all $x\in M$. Therefore, since $S$ is constant, we get 
\begin{align}\label{Eq.div}
\dv\big(\mathring{Ric}(\nabla(\phi\circ f))\big)(x)=-\frac{1}{d(\phi\circ f)_x}\|\mathring{\nabla}^2(\phi\circ f)\|^2(x).
\end{align}
By integrating and using the divergence theorem, since $M$ is compact we conclude that  
\begin{align*}
0&=\int_M\dv\big(\mathring{Ric}(\nabla(\phi\circ f))\big)dvol_g=-\int_M\frac{1}{d(\phi\circ f)_x}\|\mathring{\nabla}^2(\phi\circ f)\|^2(x)dvol_g(x),
\end{align*}
which implies that $(M^n,g)$ is an Einstein manifold and $\nabla(\phi\circ f)$ is a conformal vector field.  It follows from Lemma \ref{lemcon} that 
\begin{align*}
\nabla^2(\phi\circ f)=\left[-\frac{S(\phi\circ f)}{n(n-1)}+c\right]g.
\end{align*}	
for some constant $c$. 

Since $M$ is compact,  Obata's theorem \cite{obata} implies  that $(M^n,g)$ is isometric to a standard sphere $\mathbb{S}^n(r)$ and we can write
\begin{align*}
\mathfrak{L}_{\nabla(\phi\circ f)}g=2\psi g,
\end{align*}
where the conformal factor $\psi=\frac{\Delta(\phi\circ f)}{n}$ satisfies the equation $\Delta\psi+\frac{S}{n-1}\psi=0$ (see e.g. Yano~\cite{yano}). 

Rescaling the metric, we can assume that $S = n(n-1)$.  Hence,  $\Delta(\phi\circ f)$ is the first eigenfunction of
the unit  sphere $\mathbb{S}^n$. Therefore, there is a fixed vector $w\in\mathbb{S}^n$ such that $\Delta(\phi\circ f)=h_w=-\frac{\Delta h_w}{n}$ (see \cite{berger}), where $h_w$ is the height function on the unit sphere.   
Therefore, $\phi\circ f=c-\frac{h_w}{n}$ for some constant $c$.  Since $\phi$ is strictly monotone,
 we conclude that  $f=\phi^{-1}\left(c-\frac{h_w}{n}\right)$.  
\end{proof}

\subsection{Proof of Theorem \ref{R2}}
\begin{proof}
As in the previous proof,  by hypothesis we have 
\begin{align}\label{dvrc}
\dv\big(\mathring{Ric}(\nabla(\phi\circ f))\big)(x)&=-\frac{1}{d(\phi\circ f)_x}\|\mathring{\nabla}^2(\phi\circ f)\|^2(x),
\end{align} 
for all $x\in M$.  The constant $c_1$ of \eqref{OD1} is nonzero,  therefore  $\dv\big(\mathring{Ric}(\nabla(\phi\circ f))\big)$ does not change sign on $M^n$.
 By integrating \eqref{dvrc}, it follows from Lemma \ref{karp} (Karp's theorem) that  
\begin{align*}
-\int_M\frac{1}{\phi'(t)}\|\mathring{\nabla}^2(\phi\circ f)\|^2(x)dvol_g(x)=\int_M\dv\big(\mathring{Ric}(\nabla(\phi\circ f))\big)dvol_g=0.
\end{align*}

Thus, $\nabla(\phi\circ f)$ is a conformal vector field and  hence $(M^n,g)$ is an Einstein manifold. Therefore, Lemma \ref{lemcon} implies  that 
\begin{align}\label{confvector}
\nabla^2(\phi\circ f)=\left(-\frac{S(\phi\circ f)}{n(n-1)}+c\right)g, 
\end{align}
and hence the classification of $(M^n,g)$ depends on the scalar curvature $S$ and on the existence of critical points of $f$. From Bonnet-Myers theorem we have that the constant scalar curvature $S\leq 0$.     

If $S=0$, then either $c=0$ or $c\neq0$.  So, if $c=0,$ then $(M^n,g)$ is isometric to $\mathbb{R}\times N^{n-1}$ (see \cite{kerbrat}) and if $c\neq0$ then $(M^n,g)$ is isometric to the Euclidean space $(\mathbb{R}^n,g_\circ)$ (see \cite{kerbrat} or Theorem $5.5$ in  \cite{B-Tenen}), which proves $(i)$ and $(ii).$  

If $S<0$, then we must consider the critical points of $f$, which are also the same critical points of $\phi\circ f$. If $f$ has any critical point then $(M^n,g)$ is isometric to $\mathbb{H}^n\left(\sqrt{\frac{n(n-1)}{|S|}}\right)$, (see \cite{kanai} or   Theorem $5.7$ in \cite{B-Tenen}), which proves $(iii)$.  On the other hand, if $f$ has no critical points, then $(M^n,g)$ is isometric to $\mathbb{R}\times N^{n-1}$, with the metric given by  $dt^2+e^{2\sqrt{\frac{|S|}{n(n-1)}}t}g_N,$ where $(N^{n-1},g_N)$ is a Riemannian Einstein manifold, (see \cite{kerbrat} or  Theorem $5.7$ in  \cite{B-Tenen}). This proves $(iv).$ 
\end{proof}

\subsection{Proof of Corollary \ref{aaa}}
\begin{proof}
Initially, we note that $(M^n,g)$ has constant scalar curvature $S$ and there is a conformal Einstein metric $\overline{g}$,  whose Ricci tensor $\overline{Ric}$ satisfies
\begin{align*}
\overline{Ric}=Ric+\frac{(n-2)}{\varphi}\nabla^2\varphi+\frac{\Delta\varphi}{\varphi}g-(n-1)\frac{\|\nabla\varphi\|^2}{\varphi^2}g.	
\end{align*}
Moreover, 
\begin{align*}
	\overline{Ric}=\frac{\overline{S}}{n}\overline{g}=\frac{\overline{S}}{n\varphi^2}g,
\end{align*}
where $\overline{S}$ is the (constant) scalar curvature of $\overline{g}.$ 

So,
\begin{align}\label{var}
Ric+\frac{(n-2)}{\varphi}\nabla^2\varphi=\lambda g,	
\end{align}
with 
\begin{align*}
\lambda=\frac{\overline{S}}{n\varphi^2}-\frac{\Delta\varphi}{\varphi}+(n-1)\frac{\|\nabla\varphi\|^2}{\varphi^2}.	
\end{align*}

Now, taking $f=(n-2)\ln\varphi,$ i.e., $\varphi=\phi\circ f,$ by a straightforward calculation $\left(M^n,g,f,\lambda,-\frac{1}{n-2}\right)$ is a generalized quasi-Einstein manifold and since
\begin{align*}
\displaystyle{\liminf_{r\to\infty}} \frac{1}{r}\int\limits_{B(2r)\backslash B(r)}\|\mathring{Ric}(\nabla(\phi\circ f))\|dvol_g&=\displaystyle{\liminf_{r\to\infty}} \frac{1}{r}\int\limits_{B(2r)\backslash B(r)}\|\mathring{Ric}(\nabla\varphi)\|dvol_g=0,
\end{align*}
the classification of $(M^n,g)$ follows by Theorem \ref{R2}.
\end{proof}

\section{ Classes of Generalized Quasi-Einstein Manifolds}\label{nonconstantS}

In this section, we prove Theorems \ref{radialcomplete} and \ref{translation},  which show the existence of  classes of  complete  generalized quasi-Einstein manifold, that  are conformal to the Euclidean 
space. We also provide explicit examples.  
We observe that the manifolds have the property that besides $\nu$ being  a function of the potential  $f$, as in the previous section, the functions $\varphi$ and also $\lambda$ are functions of $f$. 
 
Let $(\mathbb{R}^n,g_\circ)$ be the Euclidean space, with  coordinates $x=(x_1,\ldots,x_n)$, $n\geqslant 3$,  where $g_\circ$  is the standard metric. In our next lemma, we present the differential equations that must be satisfied by the functions of a  generalized quasi-Einstein manifold, which are  conformal to the Euclidean space.  

\begin{lemma}\label{prop4}
Let $(\mathbb{R}^n,g_\circ),\ n\geqslant3,$ be the Euclidean space. Consider smooth functions $\varphi, \ f,\ \lambda$ and $\nu:\mathbb{R}^n\to\mathbb{R}$,  such that $\varphi$ does not vanish. Then $(\mathbb{R}^n,g,f,\lambda,\nu),$ with $g=g_\circ/{\varphi^2}$ is a nontrivial generalized quasi-Einstein manifold if, and only if, the functions $f,\lambda,\nu$ and $\varphi$ satisfy 
for all $i\neq j$, $1\leq i,j\leq n$,   
\begin{align}\label{arad1}
(n-2)\frac{\varphi_{x_ix_j}}{\varphi}+f_{x_ix_j}+f_{x_i}\frac{\varphi_{x_j}}{\varphi}+\frac{\varphi_{x_i}}{\varphi}f_{x_j}-\nu f_{x_i}f_{x_j}=0
\end{align} 	
and for all $i$, 
\begin{align}\label{arad2}
\nonumber&(n-2)\frac{\varphi_{x_ix_i}}{\varphi}+f_{x_ix_i}+2f_{x_i}\frac{\varphi_{x_i}}{\varphi}-\nu\cdot(f_{x_i})^2+\\
&+\sum_{k=1}^n\left[\frac{\varphi_{x_kx_k}}{\varphi}-(n-1)\left(\frac{\varphi_{x_k}}{\varphi}\right)^2-f_{x_k}\frac{\varphi_{x_k}}{\varphi}\right]=\frac{\lambda}{\varphi^2}.
\end{align}
\end{lemma}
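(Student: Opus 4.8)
The plan is to regard $g=g_\circ/\varphi^2$ as a conformal deformation of the flat metric and to write the defining equation \eqref{gqEm} out componentwise in the Euclidean coordinate frame $\{\partial_{x_1},\dots,\partial_{x_n}\}$. Since \eqref{gqEm} is an identity between symmetric $2$-tensors, it holds if and only if each of its frame components vanishes; the off-diagonal components ($i\neq j$) will produce \eqref{arad1} and the diagonal components ($i=j$) will produce \eqref{arad2}. Thus the whole statement reduces to computing the three tensors $Ric$, $\nabla^2 f$ and $df\otimes df$ of $g$ in these coordinates and reading off the entries.

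First I would set $\omega=-\ln\varphi$, so that $g=e^{2\omega}g_\circ$, and invoke the two standard conformal transformation formulas relative to the flat background (for which $Ric_{g_\circ}=0$ and $\nabla_\circ$ is the ordinary coordinate derivative): the Ricci formula
\[
Ric_{ij} = -(n-2)\big(\omega_{x_ix_j}-\omega_{x_i}\omega_{x_j}\big) - \Big(\Delta_\circ\omega+(n-2)\|\nabla_\circ\omega\|^2\Big)\delta_{ij},
\]
and the Hessian formula
\[
(\nabla^2 f)_{ij} = f_{x_ix_j} - \omega_{x_i}f_{x_j}-\omega_{x_j}f_{x_i} + \langle\nabla_\circ\omega,\nabla_\circ f\rangle\,\delta_{ij}.
\]
Substituting $\omega_{x_i}=-\varphi_{x_i}/\varphi$ and $\omega_{x_ix_j}=-\varphi_{x_ix_j}/\varphi+\varphi_{x_i}\varphi_{x_j}/\varphi^2$ yields the crucial cancellation $\omega_{x_ix_j}-\omega_{x_i}\omega_{x_j}=-\varphi_{x_ix_j}/\varphi$, so the leading Ricci term becomes $(n-2)\varphi_{x_ix_j}/\varphi$ and the trace term becomes $\sum_k[\varphi_{x_kx_k}/\varphi-(n-1)(\varphi_{x_k}/\varphi)^2]\,\delta_{ij}$; likewise the Hessian of $f$ contributes $f_{x_ix_j}+(\varphi_{x_j}/\varphi)f_{x_i}+(\varphi_{x_i}/\varphi)f_{x_j}$ together with the diagonal term $-\sum_k f_{x_k}\varphi_{x_k}/\varphi$. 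Finally $(\nu\,df\otimes df)_{ij}=\nu f_{x_i}f_{x_j}$ and $(\lambda g)_{ij}=(\lambda/\varphi^2)\delta_{ij}$.

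Collecting these in \eqref{gqEm}, the off-diagonal entries ($i\neq j$, where the trace term and $\lambda g$ both vanish) give exactly \eqref{arad1}, while the diagonal entries give exactly \eqref{arad2}; the equivalence is then immediate, since both sides of \eqref{gqEm} are symmetric tensors determined by their frame components. The nontriviality hypothesis (that $f$ is nonconstant) only ensures that the terms carrying $\nu$ are genuine, and plays no role in the algebra. I do not anticipate a conceptual difficulty: the entire content is the bookkeeping of the conformal formulas. The one place demanding care is the sign convention, since the conformal factor is $\varphi^{-2}$ rather than $\varphi^{2}$, which forces $\omega=-\ln\varphi$ and the cancellations above; a single sign slip in either the Ricci or the Hessian formula would misplace a term between \eqref{arad1} and \eqref{arad2}. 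I would therefore sanity-check the formulas against the flat case $\varphi\equiv 1$ with $f$ constant before assembling the two final identities.
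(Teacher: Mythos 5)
Your proposal is correct and follows essentially the same route as the paper: both write the fundamental equation \eqref{gqEm} componentwise for $g=g_\circ/\varphi^2$ using the coordinate expressions of $Ric$, $\nabla^2 f$ and $df\otimes df$, and then identify the off-diagonal entries with \eqref{arad1} and the diagonal entries with \eqref{arad2}. The only difference is presentational --- the paper states these component formulas directly, while you derive them from the standard conformal-change formulas via $\omega=-\ln\varphi$; your intermediate cancellation $\omega_{x_ix_j}-\omega_{x_i}\omega_{x_j}=-\varphi_{x_ix_j}/\varphi$ and the resulting expressions agree exactly with those in the paper's proof.
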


\begin{proof}
The proof follows by  computing each term of the fundamental equation \eqref{gqEm} for the  metric $g_{ij}=\delta_{ij}/{\varphi^2}$. 
Since 
\begin{align*}
(Ric_{g})_{ij}=\frac{(n-2)}{\varphi}\varphi_{x_ix_j}+\sum_{k=1}^n\left[\frac{\varphi_{x_kx_k}}{\varphi}-(n-1)\left(\frac{\varphi_{x_k}}{\varphi}\right)^2\right]\delta_{ij},
\end{align*}
\begin{align*}
(\nabla^2f)_{ij}=f_{x_ix_j}+f_{x_i}\frac{\varphi_{x_j}}{\varphi}+\frac{\varphi_{x_i}}{\varphi}f_{x_j}-\frac{1}{\varphi}
\sum_{k=1}^n f_{x_k}\varphi_{x_k}\delta_{ij},
\end{align*}
$(df\otimes df)_{ij}=f_{x_i}f_{x_j}$, we conclude that 
\eqref{gqEm} holds if, and only if,  \eqref{arad1} and \eqref{arad2} hold.

\end{proof}	

Observe that the scalar curvature of the metric $g=g_0/\varphi^2$ is given by 
\begin{equation}\label{Sconformal}
S= (n-1)(2\varphi\Delta_{g_0}\varphi-n|\nabla_{g_0}\varphi|^2).
\end{equation} 

We point out that the system of equations \eqref{arad1} and \eqref{arad2} admits radial solutions and also solutions that are invariant under the action of an $(n-1)$ dimensional  translation group. These  solutions provide the generalized quasi-Einstein manifolds given in Theorems \ref{radialcomplete} and \ref{translation}.  

\subsection{Proof of Theorem \ref{radialcomplete}. Examples } 
\begin{proof} 
Assume that $\varphi$ and $f$ are functions of $r=\|x\|^2$. 
 As a consequence of Lemma 4.1,  $(\R^n,g,f,\lambda,\nu)$ is a generalized quasi-Einstein manifold, where $g=g_0/\varphi^2$ if, and only if,
 \eqref{arad1} and \eqref{arad2} hold.  
 A straightforward computation shows that these equations reduce, respectively, to
\begin{align}\label{aradcomp1}
 4x_ix_j\left[(n-2)\frac{\varphi''}{\varphi}+f''+2f'\frac{\varphi'}{\varphi}
-\nu(f')^2 \right]=0,
\end{align}
 for all $i\neq j$, $x\in \mathbb{R}^n$ and for all $i$, 
\begin{align}\label{aradcomp2}
& 4x_ i^2\left[(n-2)\frac{\varphi''}{\varphi}+f''+2f'\frac{\varphi'}{\varphi}
-\nu(f')^2 \right]+ 2f'\nonumber \\
& +4\left[(n-1)\frac{\varphi'}{\varphi} +\frac{\varphi''}{\varphi} r -(n-1)
\left(\frac{\varphi'}{\varphi} \right)^2 r- f'\frac{\varphi'}{\varphi}r \right]=\frac{\lambda}{\varphi^2}.
\end{align}
 
Since $f(r)$ is a strictly monotone function, then $f'\neq 0$. Therefore, it follows from  \eqref{aradcomp1}  and \eqref{aradcomp2} that 
  $(\mathbb{R}^n,g,f,\lambda,\nu)$  is a generalized quasi-Einstein manifold if, and only if,  $\nu$  is given in terms of 
 $\varphi$ and $f$ by  \eqref{rad1n} and  $\lambda$ is defined by \eqref{rad2n}. The expression \eqref{radS}  of the scalar curvature follows from \eqref{Sconformal}.
 
 If  $\varphi$ is a bounded function, then the  completeness of the manifold follows from the fact that the length of any divergent curve is unbounded.   

\end{proof}

\vspace{.2in}

We now present a couple of examples of complete, conformally flat, generalized quasi-Einstein manifold in  $\mathbb{R}^n$,   described in Theorem \ref{radialcomplete}. 

\vspace{.1in} 

\noindent {\bf Example 1.} We consider the conformal factor $\varphi(r)$, where $r=\|x\|^2$, given by 
 $ \; 
\varphi(r)=e^{-\frac{r^2}{2}}   
$
and the potential  of the gaussian soliton,  
\;  $f(r)=c r$,  \; $c\neq 0$.
From equations \eqref{rad1n} and \eqref{rad2n} we obtain
\[
\nu(r)= \frac{1}{c^2}(nr^2-2cr-2r^2-n+2)
\]
and 
\[
\lambda(r)=4e^{-r^2}r(-nr^2+cr+2r^2-n)+2ce^{-r^2}.
\]
Since $\| \varphi \|< 1$,  it follows from Theorem \ref{radialcomplete} that $(\mathbb{R}^n, g_\circ/\varphi^2, f,\lambda,\nu)$   is a complete generalized quasi-Einstein manifold, whose non constant (negative) scalar curvature is given by   
\[
S(r)= -4(n-1)e^{-r^2}r(nr^2-2r^2+n+2).
\] 
\vspace{.1in} 

\noindent {\bf Example 2.} We consider the conformal factor $\varphi(r)=1/(1+r)   
$, where $r=\|x\|^2$ and the potential  of the gaussian soliton,  
\;  $f(r)=c r$,  \; $c\neq 0$.
From equations \eqref{rad1n} and \eqref{rad2n} we obtain
\[
\nu(r)= \frac{2}{c^2(1+r)^2} [n-2-c(1+r)]
\]
and
\[
\lambda(r)=\frac{4}{(1+r)^4}\left[cr(1+r)-2r(n-2)-n+1)  \right] +\frac{2c}{(1+r)^2}. 
\]
Since $\| \varphi \|< 1$, Theorem \ref{radialcomplete} implies  that $(\mathbb{R}^n, g_\circ/\varphi^2, f,\lambda,\nu)$   is a complete generalized quasi-Einstein manifold, whose  scalar curvature is given by   
\[
S(r)=\frac{4(n-1)}{(1+r)^4}(4r-2nr-n).
\]

\vspace{.1in}

One can certainly produce many other explicit examples of complete 
generalized quasi-Einstein manifolds, by choosing  other  radial  potential functions $f(r)$ that are strictly monotone or other bounded and non vanishing radial functions $\varphi(r)$, where $r=\|x\|^2$.

\subsection{Proof of Theorem \ref{translation}. Examples } 

\begin{proof}    
Assume that $f(u)$ and $\varphi(u)$ are functions of $u=\sum_{k=1}^n \alpha_k x_k$, where $a=\sum_{k=1}^n\alpha_k^2\neq 0$,  such that $\varphi$ does not vanish. As a consequence of Lemma 4.1,  $(\R^n,g,f,\lambda,\nu)$ is a generalized quasi-Einstein manifold, where $g=g_0/\varphi^2$ if, and only if, for all $i\neq j$ 
\begin{equation}\label{transij}
\alpha_i\alpha_j\left[(n-2)\frac{\varphi''}{\varphi}+f''+2f'\frac{\varphi'}{\varphi} - \nu (f')^2\right]=0,
\end{equation}
and for all $i$  
\begin{eqnarray} 
\lambda&=& \alpha_i^2[(n-2)\varphi\varphi''+\varphi^2f''+2\varphi f'\varphi '-
\nu\varphi^2(f')^2]+\nonumber\\
 &&+ a \,[\varphi\varphi''-(n-1)(\varphi')^2-f'\varphi\varphi'].\label{transii}
\end{eqnarray}

If there exists $i\neq j$ such that $\alpha_i\alpha_j\neq 0$, then it follows from \eqref{transij} that
\begin{equation}\label{eqnutransl}
(n-2)\frac{\varphi''}{\varphi}+f''+2f'\frac{\varphi'}{\varphi} - \nu (f')^2=0. 
\end{equation}
Hence \eqref{transii} reduces to 
\begin{equation}\label{lamtransldemo}
\lambda=a \,[\varphi\varphi''-(n-1)(\varphi')^2-f'\varphi\varphi'].
\end{equation}

If for all pairs  $i \neq j $, $\alpha_i\alpha_j=0$, then $u$ depends only on one of the variables $x_i$ and  without loss of generality, we may assume $u=\alpha_1 x_1$, $\alpha_1\neq 0$.
Then \eqref{transij} is trivially satisfied for all $i\neq j$.  Moreover,  for $i>1$, \eqref{transii}  reduces to \eqref{lamtransldemo} with 
$a=\alpha_1^2\neq 0$
and for $i=1$, it reduces to 
\begin{eqnarray*} 
\lambda= \alpha_1^2[(n-2)\varphi\varphi''+\varphi^2f''+2\varphi f'\varphi '-
\nu\varphi^2(f')^2]+\\
+a \,[(n-2)\frac{\varphi''}{\varphi}+f''+2f'\frac{\varphi'}{\varphi} - \nu (f')^2],  
\end{eqnarray*}
Therefore, since  $\lambda$ satisfies \eqref{lamtransldemo}, it follows from  
the last  equation that \eqref{eqnutransl} holds. 

Hence, we conclude that in both cases the functions $\nu$ and  $\lambda$ are determined in terms of $\varphi$ and $f$ by \eqref{nutransl} and \eqref{lambdatransl} respectively.

The expression of the scalar curvature follows from \eqref{Sconformal} and is given by \eqref{Stransl}. If  $\varphi$ is a bounded function, then the  completeness of the manifold follows from the fact that the length of any divergent curve is unbounded.   

\end{proof}

{\bf Example 3.} We consider the conformal factor $\varphi(u)=1+\tanh u$,    
 where $u=\sum_{k=1}^n\alpha_k x_k$, with $a=\sum_{k=1}^n\alpha_k^2\neq 0$ and the potential function   
\;  $f(u)= u$. From equations  \eqref{nutransl} and \eqref{lambdatransl}, we obtain
\[
\nu=\frac{2\,[1-(n-2)\tanh u]}
{(\cosh u)^2\,(1+\tanh u)}   
\]
 and 
 \[
 \lambda= \frac{a}{(\cosh u)^2}[(n-3)(\tanh u)^2 -3\tanh u -n] .
 \]
 Since $\varphi$ is a bounded function it follows from Theorem \ref{translation} that  $(\mathbb{R}^n, g_\circ/\varphi^2, f,\lambda,\nu)$   is a complete generalized quasi-Einstein manifold, whose  scalar curvature is  given by  
 \[
 S=\frac{a(n-1)}{(\cosh u)^2}[(n-4)(\tanh u)^2-4\tanh u -n].
 \]

\vspace{.1in}

One can produce many other explicit examples of complete 
generalized quasi-Einstein manifolds, by choosing other  strictly monotone potential functions $f(u)$ or  bounded and non vanishing functions $\varphi(u)$, where $u=\sum_{k=1}^n\alpha_k x_k$ and $a=\sum_{k=1}^n\alpha_k^2\neq 0$, which are invariant under the action of an $(n-1)$ dimensional translation group.

\end{document}